\documentclass{amsart}
\usepackage[latin1]{inputenc}
\usepackage{amssymb,amsmath,amsthm}
\usepackage{amsfonts}
\usepackage{ifthen}
\usepackage{paralist}

\newcommand{\vect}[1]{\ensuremath{\mathbf{#1}}}
\newcommand{\card}[1]{\ensuremath{\lvert{#1}\rvert}}
\newcommand{\subf}[1][]{\ifthenelse{\equal{#1}{}}{\ensuremath{\leq}}{\ensuremath{\leq_{#1}}}}
\newcommand{\propsubf}[1][]{\ifthenelse{\equal{#1}{}}{\ensuremath{<}}{\ensuremath{{<}_{#1}}}}
\newcommand{\subc}[1][]{\ifthenelse{\equal{#1}{}}{\ensuremath{\preccurlyeq}}{\ensuremath{\preccurlyeq_{#1}}}}
\newcommand{\fequiv}[1][]{\ifthenelse{\equal{#1}{}}{\ensuremath{\equiv}}{\ensuremath{\equiv_{#1}}}}
\newcommand{\cl}[1]{\ensuremath{\mathcal{#1}}}

\theoremstyle{plain}
\newtheorem{theorem}{Theorem}[section]
\newtheorem{lemma}[theorem]{Lemma}
\newtheorem{corollary}[theorem]{Corollary}

\theoremstyle{remark}
\newtheorem{remark}[theorem]{Remark}

\begin{document}
\title{A note on minors determined by clones of semilattices}
\author{Erkko Lehtonen}
\address{Computer Science and Communications Research Unit \\
University of Luxembourg \\
6, rue Richard Coudenhove-Kalergi \\
L--1359 Luxembourg \\
Luxembourg}
\email{erkko.lehtonen@uni.lu}
\date{\today}
\begin{abstract}
The $\mathcal{C}$-minor partial orders determined by the clones generated by a semilattice operation (and possibly the constant operations corresponding to its identity or zero elements) are shown to satisfy the descending chain condition.
\end{abstract}

\maketitle

\section{Introduction}

This paper is a study of substitution instances of functions of several arguments when the inner functions are taken from a prescribed set of functions. Such an idea has been studied by several authors. Henno~\cite{Henno} generalized Green's relations to Menger algebras (essentially, abstract clones) and described Green's relations on the set of all operations on $A$ for each set $A$. Harrison~\cite{Harrison} considered two Boolean functions to be equivalent if they are substitution instances of each other with respect to the general linear group $\mathrm{GL}(n, \mathbb{F}_2)$ or the affine linear group $\mathrm{AGL}(n, \mathbb{F}_2)$, where $\mathbb{F}_2$ denotes the two-element field. In~\cite{Wang,WW}, a Boolean function $f$ is defined to be a minor of another Boolean function $g$, if ad only if $f$ can be obtained from $g$ by substituting for each variable of $g$ a variable, a negated variable, or one of the constants $0$ or $1$. Further variants of the notion of minor can be found in~\cite{CP,EFHH,FH,Pippenger,Zverovich}.

These ideas are unified and generalized by the notions of $\cl{C}$-minor and $\cl{C}$-equiv\-a\-lence, which first appeared in print in~\cite{LehtonenULM}. More precisely, let $A$ be a nonempty set, and let $f \colon A^n \to A$ and $g \colon A^m \to A$ be operations on $A$. Let $\cl{C}$ be a set of operations on $A$. We say that $f$ is a $\cl{C}$-minor of $g$, if $f = g(h_1, \dots, h_m)$ for some $h_1, \dots, h_m \in \cl{C}$, and we say that $f$ and $g$ are $\cl{C}$-equivalent if $f$ and $g$ are $\cl{C}$-minors of each other. If $\cl{C}$ is a clone, then the $\cl{C}$-minor relation is a preorder and it induces a partial order on the $\cl{C}$-equivalence classes. For background and basic results on $\cl{C}$-minors and $\cl{C}$-equivalences, see~\cite{LehtonenULM,LSdisc,LSff,LSff3}.

In this paper, we study the $\cl{C}$-minors and $\cl{C}$-equivalences induced by the clones generated by semilattice opeations (and possibly some constants). Our main result (Theorem~\ref{thm:main}) asserts that if $(A; \wedge)$ is a semilattice, then for the clone $\cl{C} = \langle \wedge \rangle$ generated by $\wedge$, the induced $\cl{C}$-minor partial order satisfies the descending chain condition. Furthermore, if $(A; \wedge)$ has an identity element $1$ and a zero element $0$, then this property is enjoyed by all clones $\cl{C}$ such that $\langle \wedge \rangle \subseteq \cl{C} \subseteq \langle \wedge, 0, 1 \rangle$. These results find an application in~\cite{LN}, in which the clones of Boolean functions are classified according to certain order-theoretical properties of their induced $\cl{C}$-minor partial orders.

\section{Clones, $\cl{C}$-minors and $\cl{C}$-decompositions}

\subsection{Operations and clones}

Throughout this paper, for an integer $n \geq 1$, we denote $[n] := \{1, \dots, n\}$.
Let $A$ be a fixed nonempty base set. An \emph{operation} on $A$ is a map $f \colon A^n \to A$ for some integer $n \geq 1$, called the \emph{arity} of $f$. We denote the set of all $n$-ary operations on $A$ by $\cl{O}_A^{(n)}$, and we denote by $\cl{O}_A := \bigcup_{n \geq 1} \cl{O}_A^{(n)}$ the set of all operations on $A$. The $i$-th $n$-ary \emph{projection} ($1 \leq i \leq n$) is the operation $(a_1, \dotsc, a_n) \mapsto a_i$, and it is denoted by $x_i^{(n)}$, or simply by $x_i$ when the arity is clear from the context.

If $f \in \cl{O}_A^{(n)}$ and $g_1, \dotsc, g_n \in \cl{O}_A^{(m)}$, then the \emph{composition} of $f$ with $g_1, \dotsc, g_n$, denoted $f(g_1, \dotsc, g_n)$ is the $m$-ary operation defined by
\[
f(g_1, \dotsc, g_n)(\vect{a}) = f \bigl( g_1(\vect{a}), \dotsc, g_n(\vect{a}) \bigr)
\]
for all $\vect{a} \in A^m$.

Let $\cl{C} \subseteq \cl{O}_A$. The \emph{$n$-ary part of $\cl{C}$} is the set $\cl{C}^{(n)} := \cl{C} \cap \cl{O}_A^{(n)}$ of $n$-ary members of $\cl{C}$.
A \emph{clone} on $A$ is a subset $\cl{C} \subseteq \cl{O}_A$ that contains all projections and is closed under composition, i.e., $f(g_1, \dots, g_n) \in \cl{C}$ whenever $f, g_1, \dots, g_n \in \cl{C}$ and the composition is defined.

The clones on $A$ constitute a complete lattice under inclusion. Therefore, for each set $F \subseteq \cl{O}_A$ of operations there exists a smallest clone that contains $F$, which will be denoted by $\langle F \rangle$ and called the \emph{clone generated by $F$.} See~\cite{DW,Lau,Szendrei} for general background on clones.

\subsection{$\cl{C}$-minors}

Let $\cl{C} \subseteq \cl{O}_A$, and let $f, g \in \cl{O}_A$. We say that $f$ is a \emph{$\cl{C}$-minor} of $g$, if $f = g(h_1, \dotsc, h_m)$ for some $h_1, \dotsc, h_m \in \cl{C}$, and we denote this fact by $f \subf[\cl{C}] g$. We say that $f$ and $g$ are \emph{$\cl{C}$-equivalent,} denoted $f \fequiv[\cl{C}] g$, if $f$ and $g$ are $\cl{C}$-minors of each other.

The $\cl{C}$-minor relation $\subf[\cl{C}]$ is a preorder (i.e., a reflexive and transitive relation) on $\cl{O}_A$ if and only if $\cl{C}$ is a clone. If $\cl{C}$ is a clone, then the $\cl{C}$-equivalence relation $\fequiv[\cl{C}]$ is an equivalence relation on $\cl{O}_A$, and, as for preorders, $\subf[\cl{C}]$ induces a partial order $\subc[\cl{C}]$ on the quotient $\cl{O}_A / {\fequiv[\cl{C}]}$. It follows from the definition of $\cl{C}$-minor, if $\cl{C}$ and $\cl{K}$ are clones such that $\cl{C} \subseteq \cl{K}$, then ${\subf[\cl{C}]} \subseteq {\subf[\cl{K}]}$ and ${\fequiv[\cl{C}]} \subseteq {\fequiv[\cl{K}]}$. For further background and properties of $\cl{C}$-minor relations, see \cite{LehtonenULM,LSdisc,LSff,LSff3}.

\subsection{$C$-decompositions}

Let $\cl{C}$ be a clone on $A$, and let $f \in \cl{O}_A^{(n)}$. If $f = g(\phi_1, \dotsc, \phi_m)$ for some $g \in \cl{O}_A^{(m)}$ and $\phi_1, \dotsc, \phi_m \in \cl{C}$, then we say that the $(m+1)$-tuple $(g, \phi_1, \dotsc, \phi_m)$ is a \emph{$\cl{C}$-decomposition} of $f$. We often avoid referring explicitly to the tuple and we simply say that $f = g(\phi_1, \dotsc, \phi_m)$ is a $\cl{C}$-decomposition. Clearly, there always exists a $\cl{C}$-decomposition of every $f$ for every clone $\cl{C}$, because $f = f(x_1^{(n)}, \dotsc, x_n^{(n)})$ and projections are members of every clone. A $\cl{C}$-decomposition of a nonconstant function $f$ is \emph{minimal} if the arity $m$ of $g$ is the smallest possible among all $\cl{C}$-decompositions of $f$. This smallest possible $m$ is called the \emph{$\cl{C}$-degree} of $f$, denoted $\deg_{\cl{C}} f$. We agree that the $\cl{C}$-degree of any constant function is $0$.

\begin{lemma}
\label{lemma:Cdeg}
If $f \subf[\cl{C}] g$, then $\deg_{\cl{C}} f \leq \deg_{\cl{C}} g$.
\end{lemma}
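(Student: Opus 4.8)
The plan is to take a minimal $\cl{C}$-decomposition of $g$ and pull the witnessing inner functions of $f \subf[\cl{C}] g$ through it, exploiting that a clone is closed under composition. Concretely, suppose $f \in \cl{O}_A^{(n)}$ and $g \in \cl{O}_A^{(m)}$ with $f \subf[\cl{C}] g$, so that $f = g(h_1, \dotsc, h_m)$ for some $h_1, \dotsc, h_m \in \cl{C}^{(n)}$. Set $d := \deg_{\cl{C}} g$ and fix a minimal $\cl{C}$-decomposition $g = g'(\psi_1, \dotsc, \psi_d)$ with $g' \in \cl{O}_A^{(d)}$ and $\psi_1, \dotsc, \psi_d \in \cl{C}^{(m)}$.

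First I would verify the associativity identity
\[
f = g'(\phi_1, \dotsc, \phi_d), \qquad \text{where } \phi_j := \psi_j(h_1, \dotsc, h_m) \ (1 \le j \le d),
\]
by evaluating both sides at an arbitrary $\vect{a} \in A^n$: writing $\vect{b} := (h_1(\vect{a}), \dotsc, h_m(\vect{a}))$, we have $f(\vect{a}) = g(\vect{b}) = g'(\psi_1(\vect{b}), \dotsc, \psi_d(\vect{b}))$ and $\psi_j(\vect{b}) = \phi_j(\vect{a})$ by the definition of composition. Since $\cl{C}$ is a clone, it is closed under composition, so each $\phi_j = \psi_j(h_1, \dotsc, h_m)$ lies in $\cl{C}^{(n)}$. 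Hence $f = g'(\phi_1, \dotsc, \phi_d)$ is a $\cl{C}$-decomposition of $f$ whose outer function $g'$ has arity $d$.

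It then remains to read off the degree inequality, treating constant functions separately as dictated by the convention that assigns them $\cl{C}$-degree $0$. If $g$ is constant, then $f = g(h_1, \dotsc, h_m)$ is constant as well, so $\deg_{\cl{C}} f = 0 = \deg_{\cl{C}} g$; if $f$ is constant, then $\deg_{\cl{C}} f = 0 \le \deg_{\cl{C}} g$ trivially. In the remaining case both $f$ and $g$ are nonconstant, and the decomposition produced above witnesses $\deg_{\cl{C}} f \le d = \deg_{\cl{C}} g$, since $\deg_{\cl{C}} f$ is by definition the least outer arity over all $\cl{C}$-decompositions of $f$.

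I do not expect a genuine obstacle here: the only points requiring care are the bookkeeping of arities in the associativity computation (the $\psi_j$ are $m$-ary while the $h_i$ are $n$-ary, so the $\phi_j$ come out $n$-ary) and the separate handling of constant functions forced by the degree convention. The crux is simply that composing a minimal decomposition of $g$ with the minor map cannot increase the outer arity.
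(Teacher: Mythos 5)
Your proof is correct and follows essentially the same route as the paper's: compose a minimal $\cl{C}$-decomposition of $g$ with the inner functions witnessing $f \subf[\cl{C}] g$, and use closure of the clone under composition to obtain a $\cl{C}$-decomposition of $f$ with outer arity $\deg_{\cl{C}} g$. Your explicit verification of associativity and the separate treatment of constant functions are careful additions (the paper's proof glosses over the constant case) but do not change the argument.
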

\begin{proof}
Let $\deg_{\cl{C}} g = m$, and let $g = h(\gamma_1, \dotsc, \gamma_m)$ be a minimal $\mathcal{C}$-decomposition of $g$. Since $f \subf[\cl{C}] g$, there exist $\phi_1, \dotsc, \phi_n \in \cl{C}$ such that $f = g(\phi_1, \dotsc, \phi_n)$. Then
\[
f = h(\gamma_1, \dotsc, \gamma_m)(\phi_1, \dotsc, \phi_n)
= h(\gamma_1(\phi_1, \dotsc, \phi_n), \dotsc, \gamma_m(\phi_1, \dotsc, \phi_n)),
\]
and since $\gamma_i(\phi_1, \dotsc, \phi_n) \in \cl{C}$ for $1 \leq i \leq m$, we have that $(h, \gamma_1(\phi_1, \dotsc, \phi_n), \dotsc,\linebreak[1] \gamma_m(\phi_1, \dotsc, \phi_n))$ is a $\cl{C}$-decomposition of $f$, not necessarily minimal, so $\deg_{\cl{C}} f \leq m$.
\end{proof}

An immediate consequence of Lemma \ref{lemma:Cdeg} is that $\cl{C}$-equivalent functions have the same $\cl{C}$-degree.

Let $(\phi_1, \dotsc, \phi_m)$ be an $m$-tuple ($m \geq 2$) of $n$-ary operations on $A$. If there is an $i \in \{1, 2, \dotsc, m\}$ and $g \colon A^{m-1} \to A$ such that
\[
\phi_i = g(\phi_1, \dotsc, \phi_{i-1}, \phi_{i+1}, \dotsc, \phi_m),
\]
we say that the $m$-tuple $(\phi_1, \dotsc, \phi_m)$ is \emph{functionally dependent.} Otherwise we say that $(\phi_1, \dotsc, \phi_m)$ is \emph{functionally independent.} We often omit the $m$-tuple notation and simply say that $\phi_1, \dotsc, \phi_m$ are functionally dependent or independent.

\begin{remark}
\label{rem:fi}
Every $m$-tuple containing a constant function is functionally dependent. Also if $f_i = f_j$ for some $i \neq j$, then $f_1, \dotsc, f_n$ are functionally dependent.
\end{remark}

\begin{lemma}
\label{lemma:functindep}
If $(g, \phi_1, \dotsc, \phi_m)$ is a minimal $\cl{C}$-decomposition of $f$, then $\phi_1, \dotsc, \phi_m$ are functionally independent.
\end{lemma}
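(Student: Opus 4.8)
The plan is to argue by contradiction: I will assume that $\phi_1, \dots, \phi_m$ are functionally dependent and produce from this a $\cl{C}$-decomposition of $f$ whose outer function has arity $m-1$, contradicting the minimality of the given decomposition. First I would record two preliminary observations. Since $f$ admits a \emph{minimal} $\cl{C}$-decomposition, $f$ is nonconstant and $m = \deg_{\cl{C}} f$. Moreover, functional dependence is only defined for tuples of length at least $2$, so we may assume $m \geq 2$ and hence $m - 1 \geq 1$, which guarantees that the decomposition we are about to build has an outer function of legitimate (positive) arity.

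Suppose then that $(\phi_1, \dots, \phi_m)$ is functionally dependent. By definition there is an index $i \in [m]$ and an operation $g' \colon A^{m-1} \to A$ such that $\phi_i = g'(\phi_1, \dots, \phi_{i-1}, \phi_{i+1}, \dots, \phi_m)$. To keep the notation readable I would relabel the arguments of $g$ so that the dependent index becomes $i = m$; thus $\phi_m = g'(\phi_1, \dots, \phi_{m-1})$.

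The key step is to absorb $g'$ into the outer function. Define $h \colon A^{m-1} \to A$ by $h(y_1, \dots, y_{m-1}) := g(y_1, \dots, y_{m-1}, g'(y_1, \dots, y_{m-1}))$. Substituting the expression for $\phi_m$ into $f = g(\phi_1, \dots, \phi_m)$ yields $f = g(\phi_1, \dots, \phi_{m-1}, g'(\phi_1, \dots, \phi_{m-1})) = h(\phi_1, \dots, \phi_{m-1})$. Since $\phi_1, \dots, \phi_{m-1}$ are still members of $\cl{C}$, the tuple $(h, \phi_1, \dots, \phi_{m-1})$ is a $\cl{C}$-decomposition of $f$ with outer arity $m-1 < m$. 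This contradicts $m = \deg_{\cl{C}} f$, and the result follows.

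The argument is essentially routine; the only points needing care are the bookkeeping after relabeling (the chosen index $i$ need not equal $m$, so one must either relabel cleanly or carry the split index set $1, \dots, i-1, i+1, \dots, m$ through the substitution) and the verification that $h$ is a genuine operation of arity $m-1 \geq 1$, so that $f = h(\phi_1, \dots, \phi_{m-1})$ really competes with the given decomposition. I expect no substantive obstacle beyond confirming $m \geq 2$, which is already built into the definition of functional dependence.
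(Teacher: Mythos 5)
Your proof is correct and takes essentially the same route as the paper's: you substitute the dependence $\phi_i = g'(\phi_1, \dotsc, \phi_{i-1}, \phi_{i+1}, \dotsc, \phi_m)$ into $f = g(\phi_1, \dotsc, \phi_m)$ and absorb $g'$ into the outer function to produce a $\cl{C}$-decomposition with an $(m-1)$-ary outer function, contradicting minimality. Your relabeling so that $i = m$ is only a cosmetic variant of the paper's bookkeeping, which instead keeps the split index and writes the new outer function explicitly as $g(x_1, \dotsc, x_{i-1}, h, x_i, \dotsc, x_{m-1})$ composed with the reduced tuple.
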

\begin{proof}
Suppose, on the contrary, that $\phi_1, \dotsc, \phi_m$ are functionally dependent. Then there is an $i$ and an $h \colon A^{m-1} \to A$ such that $\phi_i = h(\phi_1, \dotsc, \phi_{i-1}, \phi_{i+1}, \dotsc, \phi_m)$. Then
\begin{multline*}
f = g(\phi_1, \dotsc, \phi_{i-1}, h(\phi_1, \dotsc, \phi_{i-1}, \phi_{i+1}, \dotsc, \phi_m), \phi_{i+1}, \dotsc, \phi_m) \\
= g(x^{(m-1)}_1, \dotsc, x^{(m-1)}_{i-1}, h, x^{(m-1)}_i, \dotsc, x^{(m-1)}_{m-1})(\phi_1, \dotsc, \phi_{i-1}, \phi_{i+1}, \dotsc, \phi_m),
\end{multline*}
which shows that $(g(x_1, \dotsc, x_{i-1}, h, x_i, \dotsc, x_{m-1}), \phi_1, \dotsc, \phi_{i-1}, \phi_{i+1}, \dotsc, \phi_m)$ is a $\cl{C}$-decomposition of $f$, contradicting the minimality of $(g, \phi_1, \dotsc, \phi_m)$.
\end{proof}

\section{$\cl{C}$-minors determined by clones of semilattices}

In this section we will prove our main result, namely Theorem~\ref{thm:main}. It will find an application in~\cite{LN} where the clones of Boolean functions are classified according to certain order-theoretical properties that their induced $\cl{C}$-minor partial orders enjoy.

An operation $\wedge$ on $A$ is called a \emph{semilattice operation,} if for all $x, y, z \in A$, the following identities hold:
\[
x \wedge (y \wedge z) = (x \wedge y) \wedge z, \qquad
x \wedge y = y \wedge x ,\qquad
x \wedge x = x,
\]
i.e., $\wedge$ is associative, commutative and idempotent.

A partial order $(P; \leq)$ is said to satisfy the \emph{descending chain condition,} or it is called \emph{well-founded,} if it contains no infinite descending chains, i.e., given any sequence of elements of $P$
\[
\cdots \leq a_3 \leq a_2 \leq a_1,
\]
there exists a positive integer $n$ such that
\[
a_n = a_{n+1} = a_{n+2} = \cdots.
\]

\begin{theorem}
\label{thm:main}
Let $\cl{S}$ be the clone generated by a semilattice operation $\wedge$ on $A$. Then the $\cl{S}$-minor partial order $\subc[\cl{S}]$ satisfies the descending chain condition.
\end{theorem}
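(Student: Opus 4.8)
The plan is to extract from each function a finite combinatorial invariant attached to the inner conjunctions of a minimal $\cl{S}$-decomposition, and to show that this invariant is bounded and cannot strictly decrease along the minor relation. First I would record the shape of $\cl{S}$: by associativity, commutativity and idempotency of $\wedge$, the members of $\cl{S}^{(n)}$ are exactly the conjunctions $e_S := \bigwedge_{i \in S} x_i$ for nonempty $S \subseteq [n]$, so that $\card{\cl{S}^{(n)}} \leq 2^n - 1$, and composition amounts to taking unions of index sets, $e_S(e_{T_1}, \dotsc, e_{T_m}) = e_{\bigcup_{i \in S} T_i}$. This union rule is the computational heart of everything that follows.

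Given a descending chain, represented by $g_1, g_2, \dotsc$ with $g_{t+1} \subf[\cl{S}] g_t$, Lemma~\ref{lemma:Cdeg} shows that $\deg_{\cl{S}} g_1 \geq \deg_{\cl{S}} g_2 \geq \cdots$ is a nonincreasing sequence of nonnegative integers, hence eventually constant, say equal to $d$; discarding an initial segment, I may assume every term has $\cl{S}$-degree $d$. Fixing a minimal decomposition $g_1 = h(\gamma_1, \dotsc, \gamma_d)$ and writing each step as $g_{t+1} = g_t(\phi_1, \dotsc, \phi_m)$ with $\phi_i \in \cl{S}$, I can \emph{propagate} the decomposition: since $g_{t+1} = h(\gamma_1(\phi_1, \dotsc, \phi_m), \dotsc, \gamma_d(\phi_1, \dotsc, \phi_m))$ and the arity $d$ is forced by $\deg_{\cl{S}} g_{t+1} = d$, every term of the chain acquires a minimal decomposition with the \emph{same} outer function $h$, whose inner operations lie in $\cl{S}$ and, by Lemma~\ref{lemma:functindep}, are functionally independent. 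Thus I only need to control, for fixed $h$, the passage from one functionally independent tuple of conjunctions $(e_{S_1}, \dotsc, e_{S_d})$ to the tuple $(e_{U_1}, \dotsc, e_{U_d})$ with $U_j = \bigcup_{i \in S_j} T_i$ produced by the union rule.

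The invariant I propose is built from the family of \emph{realized types} $\cl{T} := \{\, \{ j : i \in S_j \} : i \in \bigcup_j S_j \,\}$, the collection of subsets of $[d]$ recording, for each essential variable, the set of inner slots it feeds. The union rule shows that after one step every realized type is a union of realized types of the previous tuple, so the family can only ``move upward'': types migrate to supersets or arise as unions, but never split. I would then track the integer $\Phi := \sum_{X \in \cl{T}} \card{X}$, which is bounded above by $d \cdot 2^{d-1}$, and argue that $\Phi$ never decreases along the chain and strictly increases unless $\cl{T}$ is unchanged. Being a bounded, nondecreasing integer, $\Phi$ is eventually constant, and from that point on the realized-type family is frozen.

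The hard part will be two dual facts about a single same-degree step $f \subf[\cl{S}] g$. First, that functional independence of the inner operations forbids the degenerate collapse that would lower $\Phi$ — this is exactly where Lemma~\ref{lemma:functindep} must be used, since a collapse of two types into one would make the corresponding inner conjunctions coincide and drop the degree. Second, that when $\cl{T}$ is preserved one also has $g \subf[\cl{S}] f$, so that $f \fequiv[\cl{S}] g$; the reverse substitution is the delicate point. I would construct it explicitly by setting $\rho_k = \bigwedge \{\, x_i : E_k \subseteq D_i \,\}$, where $D_i = \{ j : i \in S_j \}$ and $E_k = \{ j : k \in U_j \}$ are the types on the two sides, and verify that the index set $V_k$ of $\rho_k$ satisfies $\bigcup_{k \in U_j} V_k = S_j$, using precisely the containments among unions of the $S_j$ that are preserved when $\cl{T}$ is frozen. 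Granting these, consecutive terms beyond the stabilization point are $\cl{S}$-equivalent, hence equal in $\cl{O}_A / {\fequiv[\cl{S}]}$, which is the descending chain condition for $\subc[\cl{S}]$. I expect the monotonicity of $\Phi$ and the well-definedness of the reverse substitution (a careless choice of $\rho_k$ can yield an empty conjunction) to be the principal technical obstacles.
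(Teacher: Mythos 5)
Your overall architecture is essentially the paper's: stabilizing the degree and propagating a common outer function $h$ is exactly how the paper's final paragraph reduces to decompositions $g(\phi_1,\dotsc,\phi_m)$ with fixed $g$; your family of realized types is the paper's invariant $X(\phi_1,\dotsc,\phi_m)\subseteq\mathcal{P}([m])$; and your reverse substitution is correct and is in substance the paper's Claim. Indeed, your verification that $\bigcup_{k\in U_j}V_k=S_j$, where $V_k=\{i: E_k\subseteq D_i\}$, uses only the definitions of the two type families ($k\in U_j \iff j\in E_k$ and $i\in S_j \iff j\in D_i$) and their equality as sets of subsets of $[d]$ (each $V_k$ is nonempty because every realized $E_k$ equals some $D_i$), so it proves outright that \emph{any} two decompositions with the same outer function and the same type family yield $\fequiv[\cl{S}]$-equivalent functions --- the paper proves the same statement by passing through a canonical middle function $g(\Psi_E)$, but the content is identical.

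The genuine gap is your monotone invariant $\Phi=\sum_{X\in\cl{T}}\card{X}$: it is \emph{not} nondecreasing along the chain, and functional independence does not rescue it. Counterexample with $d=3$: take the inner tuple $(x_1\wedge x_4\wedge x_5,\; x_2\wedge x_4\wedge x_6,\; x_3\wedge x_5\wedge x_6)$, whose six variables realize the types $\{1\},\{2\},\{3\},\{1,2\},\{1,3\},\{2,3\}$, so $\Phi=9$; substituting $x_1,x_2,x_4\mapsto y_1$, $x_3,x_5\mapsto y_2$, $x_6\mapsto y_3$ yields $(y_1\wedge y_2,\; y_1\wedge y_3,\; y_2\wedge y_3)$ with type family $\{\{1,2\},\{1,3\},\{2,3\}\}$ and $\Phi=6$. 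The three new conjunctions are pairwise distinct and functionally independent (no one is a function of the other two, as an evaluation on a two-element subsemilattice shows), and on $A=\{0,1\}$ with $h(y_1,y_2,y_3)=y_1\oplus y_2\oplus y_3$ the lower function is the ternary majority, whose $\cl{S}$-degree is $3$ (a short case analysis shows majority is not a function of two conjunctions), so by Lemma~\ref{lemma:Cdeg} both decompositions are minimal and this is a legitimate constant-degree chain step along which $\Phi$ drops. Your heuristic fails because types are attached to \emph{variables} while Lemma~\ref{lemma:functindep} constrains the \emph{slot} sets $U_j$: types can merge (here $\{1\}$ and $\{1,2\}$ both become $\{1,2\}$, etc.)\ without any two of the $U_j$ coinciding, so no drop in degree is forced. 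Hence ``$\Phi$ eventually constant $\Rightarrow$ $\cl{T}$ frozen'' is unsupported, and with it your termination argument. The repair is exactly the paper's endgame and needs no monotonicity: once the degree is constant at $d$ and all terms share the outer $h$, your own reverse-substitution lemma shows the $\fequiv[\cl{S}]$-class of each term is determined by its type family, of which there are at most $2^{2^d}$; since a strictly descending chain has pairwise inequivalent terms, the pigeonhole principle terminates it. The paper never claims the invariant evolves monotonically --- only that a strict same-degree step must \emph{change} it, which together with finiteness suffices.
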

\begin{proof}
Let $(\phi_1, \dots, \phi_m) \in (\cl{S}^{(n)})^m$. Then, for $1 \leq j \leq m$, $\phi_j$ is of the form
\begin{equation}
\phi_j = \bigwedge_{i \in \Phi_j} x_i^{(n)}
\label{eq:conj}
\end{equation}
for some $\emptyset \neq \Phi_j \subseteq [n]$. For $1 \leq i \leq n$, denote
\begin{equation}
X_i := \{j \in [m] : i \in \Phi_j\},
\label{eq:Xi}
\end{equation}
and let $X(\phi_1, \dots, \phi_m) := \{X_1, \dots, X_n\} \subseteq \mathcal{P}([m])$. It follows from the definitions of $\Phi_j$ and $X_i$ that
\begin{equation}
j \in X_i
\quad\iff\quad
i \in \Phi_j.
\label{eq:iff}
\end{equation}
Correspondingly, for any $\emptyset \neq E \subseteq \mathcal{P}([m])$, denote $\Psi_E := (\psi_1, \dots, \psi_m)$, where $\psi_j \in \cl{S}^{(\card{E})}$ is given by
\[
\psi_j = \bigwedge_{j \in S \in E} x_{\sigma_E(S)},
\]
where $\sigma_E \colon E \to [\card{E}]$ is any fixed bijection.

Let $(g, \phi_1, \dotsc, \phi_m)$ be a $\cl{S}$-decomposition of $f \colon A^n \to A$. Then each $\phi_j$ is of the form \eqref{eq:conj} for some $\emptyset \neq \Phi_j \subseteq [n]$. Let $E := X(\phi_1, \dots, \phi_m)$, $(\psi_1, \dots, \psi_m) := \Psi_E$, and let $f' = g(\psi_1, \dotsc, \psi_m)$. We will show that $f \fequiv[\cl{S}] f'$.

As in~\eqref{eq:Xi}, for $1 \leq i \leq n$, let $X_i = \{j \in [m] : i \in \Phi_j\}$. Let $\pi \colon [n] \to [\card{E}]$ be defined as $\pi(i) := \sigma_E(X_i)$. Then
\begin{multline*}
f(x_{\pi(1)}, \dotsc, x_{\pi(n)})
= g(\phi_1, \dotsc, \phi_m)(x_{\pi(1)}, \dotsc, x_{\pi(n)}) \\
= g(\phi_1(x_{\pi(1)}, \dotsc, x_{\pi(n)}), \dotsc, \phi_m(x_{\pi(1)}, \dotsc, x_{\pi(n)}))
= g(\psi_1, \dotsc, \psi_m) = f',
\end{multline*}
where the second to last equality holds because for $1 \leq j \leq m$,
\[
\phi_j(x_{\pi(1)}, \dotsc, x_{\pi(n)})
= \bigwedge_{i \in \Phi_j} x_{\pi(i)}
= \bigwedge_{i \in \Phi_j} x_{\sigma(X_i)}
= \bigwedge_{j \in S \in E} x_{\sigma(S)}
= \psi_j.
\]
Since all projections are members of $\cl{S}$, we have that $f' \subf[\cl{C}] f$.
On the other hand, for $1 \leq j \leq \card{E}$, let $\Xi_j := \{i \in [n] : X_i = \sigma_E^{-1}(j)\}$, and let
\[
\xi_j := \bigwedge_{i \in \Xi_j} x_i
\]
It is easy to see that $\Xi_j \neq \emptyset$; hence $\xi_j \in \cl{S}$. Then
\begin{multline*}
f'(\xi_1, \dotsc, \xi_{\card{E}})
= g(\psi_1, \dotsc, \psi_m)(\xi_1, \dotsc, \xi_{\card{E}}) \\
= g(\psi_1(\xi_1, \dotsc, \xi_{\card{E}}), \dotsc, \psi_m(\xi_1, \dotsc, \xi_{\card{E}}))
= g(\phi_1, \dotsc, \phi_m)
= f,
\end{multline*}
where the second to last equality holds because for $j = 1, \dotsc, m$,
\begin{multline*}
\psi_j(\xi_1, \dotsc, \xi_{\card{E}})
= \Bigl( \bigwedge_{j \in S \in E} x_{\sigma_E(S)} \Bigr) (\xi_1, \dotsc, \xi_{\card{E}})
= \bigwedge_{j \in S \in E} \xi_{\sigma_E(S)} \\
= \bigwedge_{j \in S \in E} \Bigl( \bigwedge_{i \in \Xi_{\sigma_E(S)}} x_i \Bigr)
= \bigwedge_{j \in S \in E} \Bigl( \mathop{\bigwedge_{i \in [n]}}_{X_i = S} x_i \Bigr)
= \bigwedge_{i \in \Phi_j} x_i
= \phi_j.
\end{multline*}
Here, the third last equality holds, because $\Xi_{\sigma_E(S)} = \{i \in [n] : X_i = S\}$, and the second last equality holds by~\eqref{eq:iff} and the associativity, commutativity and idempotency of $\wedge$. Since $\xi_j \in \cl{S}$, we have that $f \subf[\cl{C}] f'$. We conclude that $f \fequiv[\cl{C}] f'$, as desired.

\textit{Claim.} If $f_1 = g(\phi_1, \dotsc, \phi_m)$ and $f_2 = g(\varphi_1, \dotsc, \varphi_m)$ are $\cl{S}$-decompositions and $X(\phi_1, \dotsc, \phi_m) = X(\varphi_1, \dotsc, \varphi_m)$, then $f_1 \fequiv[\cl{S}] f_2$.

\textit{Proof of the claim.} Let $(\psi_1, \dotsc, \psi_m) := \Psi_{X(\phi_1, \dotsc, \phi_m)}$ ($= \Psi_{X(\varphi_1, \dotsc, \varphi_m)}$), and let $f' = g(\psi_1, \dotsc, \psi_m)$. It follows from what was shown above that $f_1 \fequiv[\cl{S}] f' \fequiv[\cl{S}] f_2$. The claim follows by the transitivity of $\fequiv[\cl{S}]$.
\qquad$\diamond$

To finish the proof that $\subc[\cl{S}]$ satisfies the descending chain condition, assume that $f_1 \propsubf[\cl{S}] f_2$, $f_2 = g(\phi_1, \dotsc, \phi_m)$ is a minimal $\cl{S}$-decomposition, and $f_1 = f_2(h_1, \dotsc, h_n)$ for some $h_1, \dotsc, h_n \in \cl{S}$. For $i = 1, \dotsc, m$, denote $\phi'_i = \phi_i(h_1, \dotsc, h_n)$, so that $f_1 = g(\phi'_1, \dotsc, \phi'_m)$. By Lemma \ref{lemma:Cdeg}, either $\deg_{\cl{S}} f_1 < \deg_{\cl{S}} f_2$, or $\deg_{\cl{S}} f_1 = \deg_{\cl{S}} f_2$ and $X(\phi_1, \dotsc, \phi_m) \neq X(\phi'_1, \dotsc, \phi'_m)$. Since $\cl{S}$-degrees are nonnegative integers and $\mathcal{P}([m])$ is a finite set, there are only a finite number of $\fequiv[\cl{S}]$-classes preceding the $\fequiv[\cl{S}]$-class of $f_2$ in the $\cl{S}$-minor partial order $\subc[\cl{S}]$. This completes the proof of the theorem.
\end{proof}

\begin{corollary}
Assume that a semilattice $(A; \wedge)$ has identity and zero elements $1$ and $0$, respectively. Let $\cl{C}$ be a clone on $A$ such that $\langle \wedge \rangle \subseteq \cl{C} \subseteq \langle \wedge, 0, 1 \rangle$. Then the $\cl{C}$-minor partial order $\subc[\cl{C}]$ satisfies the descending chain condition.
\end{corollary}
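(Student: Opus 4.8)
The plan is to adapt the proof of Theorem~\ref{thm:main}, enriching the combinatorial invariant $X(\phi_1, \dots, \phi_m)$ so that it additionally records which coordinates carry the constant function $0$. First I would observe that every $n$-ary member of $\langle \wedge, 0, 1 \rangle$, hence of $\cl{C}$, is either the constant $0$ or of the form $\bigwedge_{i \in \Phi} x_i^{(n)}$ for some $\Phi \subseteq [n]$, where the empty conjunction ($\Phi = \emptyset$) is read as the constant $1$; this follows from the associativity, commutativity and idempotency of $\wedge$ together with the absorption law $x \wedge 0 = 0$ and the neutrality $x \wedge 1 = x$. Accordingly, for a tuple $(\phi_1, \dots, \phi_m) \in (\cl{C}^{(n)})^m$ I would set $Z := \{ j \in [m] : \phi_j = 0 \}$, write $\phi_j = \bigwedge_{i \in \Phi_j} x_i^{(n)}$ for $j \notin Z$, put $X_i := \{ j \in [m] \setminus Z : i \in \Phi_j \}$ and $X := \{X_1, \dots, X_n\}$, and declare the \emph{type} of the tuple to be the pair $(Z, X)$. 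The type lives in the finite set $\mathcal{P}([m]) \times \mathcal{P}(\mathcal{P}([m]))$, independently of $n$.

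Next I would prove the analogue of the Claim in Theorem~\ref{thm:main}: if $f_1 = g(\phi_1, \dots, \phi_m)$ and $f_2 = g(\varphi_1, \dots, \varphi_m)$ are $\cl{C}$-decompositions with the same type, then $f_1 \fequiv[\cl{C}] f_2$. As before this reduces to comparing each side with a canonical tuple $\Psi = (\psi_1, \dots, \psi_m)$ of arity $\card{X}$ determined by the type, where now $\psi_j := 0$ for $j \in Z$ and $\psi_j := \bigwedge_{j \in S \in X} x_{\sigma_X(S)}$ (an empty conjunction, i.e. the constant $1$, when no $S \in X$ contains $j$) for $j \notin Z$, with $\sigma_X \colon X \to [\card{X}]$ a fixed bijection. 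The two $\cl{C}$-equivalences $g(\phi_1, \dots, \phi_m) \fequiv[\cl{C}] g(\Psi)$ would be witnessed by exactly the substitutions of Theorem~\ref{thm:main}: the projections $x_{\sigma_X(X_i)}$ in one direction and the nonempty conjunctions $\xi_j = \bigwedge_{i \in \Xi_j} x_i$, with $\Xi_j = \{ i : X_i = \sigma_X^{-1}(j)\} \neq \emptyset$, in the other. The only new verification is that a coordinate with $\phi_j = 0 = \psi_j$ is preserved by both substitutions, which is immediate.

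The step I expect to require the most care --- and the reason a single argument covers the whole interval $\langle \wedge \rangle \subseteq \cl{C} \subseteq \langle \wedge, 0, 1 \rangle$ without a case analysis on which constants $\cl{C}$ contains --- is checking that every operation used above genuinely lies in $\cl{C}$. The projections and the conjunctions $\xi_j$ belong to $\langle \wedge \rangle \subseteq \cl{C}$, so the only issue is the constants occurring in $\Psi$: by construction $\psi_j = 0$ exactly when $\phi_j = 0$ and $\psi_j = 1$ exactly when $\phi_j = 1$, so $\Psi$ uses the constant $0$ (resp.\ $1$) only at coordinates where it already occurs among the given $\phi_j$, whence that constant lies in $\cl{C}$ and $\psi_j \in \cl{C}$ for every $j$. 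Granting the Claim, I would conclude as in Theorem~\ref{thm:main}: fixing a minimal $\cl{C}$-decomposition $f_2 = g(\phi_1, \dots, \phi_m)$, every $f_1 \subf[\cl{C}] f_2$ can be written $f_1 = g(\phi'_1, \dots, \phi'_m)$ with $\phi'_i = \phi_i(h_1, \dots, h_n) \in \cl{C}$ and the \emph{same} $g$, so by the Claim the $\fequiv[\cl{C}]$-class of $f_1$ is determined by the type of $(\phi'_1, \dots, \phi'_m)$. Since types range over the finite set $\mathcal{P}([m]) \times \mathcal{P}(\mathcal{P}([m]))$, only finitely many $\fequiv[\cl{C}]$-classes precede that of $f_2$, and therefore $\subc[\cl{C}]$ satisfies the descending chain condition.
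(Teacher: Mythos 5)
Your proposal is correct, but it takes a genuinely different route from the paper's. The paper handles the constants abstractly rather than combinatorially: since $\cl{C} \setminus \langle \wedge \rangle$ consists of constant operations only, Remark~\ref{rem:fi} and Lemma~\ref{lemma:functindep} show that no constant can occur among the inner functions of a minimal $\cl{C}$-decomposition, so minimal $\cl{C}$-decompositions coincide with minimal $\langle \wedge \rangle$-decompositions; the concluding argument of Theorem~\ref{thm:main} then applies verbatim, with $\langle \wedge \rangle$-equivalence implying $\cl{C}$-equivalence. The price of that shortcut is that the invariant $X$ remains undefined for tuples containing constants, so the paper must lean on the degree dichotomy of Lemma~\ref{lemma:Cdeg} (the case $\deg_{\cl{C}} f_1 < \deg_{\cl{C}} f_2$ is absorbed by an implicit induction on degree). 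You instead enrich the invariant to the type $(Z, X)$ so that the canonical-form construction applies to \emph{arbitrary} $\cl{C}$-decompositions, constants included; your verifications --- the normal form in $\langle \wedge, 0, 1 \rangle$, the behaviour of both substitutions on the coordinates in $Z$ and on empty conjunctions, and $\Xi_j \neq \emptyset$ --- are all sound. This buys a cleaner and slightly stronger conclusion than the paper states: every $\fequiv[\cl{C}]$-class has only finitely many classes below it (at most $2^m \cdot 2^{2^m}$ of them, where $m$ is the arity of the chosen outer function $g$), with no induction on degree and in fact no use of minimality, Remark~\ref{rem:fi} or Lemma~\ref{lemma:functindep} at all --- the trivial decomposition $f_2 = f_2(x_1, \dotsc, x_n)$ would serve just as well as a minimal one. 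One small remark: the step you single out as requiring the most care, namely that $\psi_j \in \cl{C}$, is actually not needed anywhere. The function $f' = g(\psi_1, \dotsc, \psi_m)$ enters only as an auxiliary function, not via a $\cl{C}$-decomposition: the relation $f' \subf[\cl{C}] f$ is witnessed by projections and $f \subf[\cl{C}] f'$ by the conjunctions $\xi_j \in \langle \wedge \rangle \subseteq \cl{C}$, so the membership of the $\psi_j$ in $\cl{C}$ is irrelevant to both. Your verification of it is nevertheless correct (compose the constant $\phi_j \in \cl{C}$ with projections to change its arity), so this is a harmless redundancy, not a gap.
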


\begin{proof}
The proof of Theorem~\ref{thm:main} in fact shows that $\subc[\cl{C}]$ satisfies the descending chain condition. For, in this case $\cl{C} \setminus \cl{S}$ contains only constant operations. Remark~\ref{rem:fi} and Lemma \ref{lemma:functindep} guarantee that $f = g(h_1, \ldots, h_m)$ is a minimal $\cl{S}$-decomposition if and only if it is a minimal $\cl{C}$-decomposition, and since $\cl{S} \subseteq \cl{C}$, $\cl{S}$-equivalence implies $\cl{C}$-equivalence.
\end{proof}

\end{document}